\newtheorem{theorem}{Theorem}
\newcounter{lemm}
\newtheorem{lemma}[lemm]{Lemma}
\newcounter{prop}
\newtheorem{proposition}[prop]{Proposition}
\newcounter{coro}
\newtheorem{corollary}[coro]{Corollary}
\def\dim{\mathop{\rm dim}\nolimits}
\def\codim{\mathop{\rm codim}\nolimits}
\def\deg{\mathop{\rm deg}\nolimits}
\newcounter{rema}
\newenvironment{remark}{\smallskip\noindent\stepcounter{rema}{\bf
Remark        \arabic{rema}}.}{\qed\smallskip} 
\newcounter{exam}
\newenvironment{example}{\smallskip\noindent\stepcounter{exam}{\bf
Example \arabic{exam}}.}{\qed\smallskip}
\begin{document}

\begin{title}
{Note on classical notion of   Lee form.}
\end{title}

\begin{author}
{Piotr Dacko} 
\end{author}

\email{{\tt piotrdacko{\char64}yahoo.com}}
\subjclass[2000]{53A45, 53B99, 53C15}
\keywords{Lee form, localy conformal Kaehler manifolds, almost $\alpha$-cosymplectic 
manifolds.}

\begin{abstract}
This note is devoted to partial study of recurrent equation 
$d\omega=\beta\wedge\omega$, based on  linear algebra of
 exterior forms. Such equation was considered 
by  Lee, for non-degenerate 2-form. In this note we approach 
general case, when $\omega$ is arbitrary.  Particularly, we 
extend  results obtained by Lee, on odd-forms. 
\end{abstract}

\maketitle

\section{Introduction} It was noticed by Lee, \cite{Lee}, that
equation $d\omega=\beta\wedge\omega$, for given non-degenerate
exterior 2-form follows:  a) $d\beta=0$, if dimension of manifold is
$\geq 6$, b) if dimension $=4$, then  for any 3-form $\kappa$, there
is exactly 1-form $\beta$, such that  $\kappa= \beta\wedge\omega$. In
the original paper, there is very simple justification of the latter
fact: $\kappa=\beta\wedge\omega$, is equivalent to system of linear
equations, which can be  resolved uniquely. Particularly, in dimension
four we can find $\omega$, with $d\beta\neq 0$.  Later on Libermann,
also \'Slebodzi\'nski rediscovered these results, \cite{Lib1,Sleb}.

In this note we try to obtain some information in general setting,
 when $\omega$ is arbitrary.  The starting observation is that 
the equation $d\omega=\beta\wedge\omega$, implies $d\beta\wedge\omega=0$. 
The latter can be studied point-wise, by means of some basic linear
 algebra.  
\section{Preliminaries}

For vector space $V$, $\dim V=n$, by $\Lambda^k(V^*)$ we denote the
space of all $k$-linear totally anti-symmetric real functions (forms) on
$V$. We set $\Lambda^0(V^*)=\mathbb R$ and $c\in \mathbb R$ is treated as
constant function. Elements of
$\Lambda^k(V^*)$ are called $k$-forms, we set 
\begin{equation}
\Lambda(V^*) =  \Lambda^0(V^*)\oplus\Lambda^1(V^*)\oplus\ldots,
\oplus\Lambda^n(V^*).
\end{equation} The $\wedge$-product (exterior multiplication) on
$\Lambda(V^*)$ is defined as usually. If $c\in \Lambda^0(V^*)$,
$\omega\in\Lambda(V^*)$ then $c\wedge\omega = c\,\omega $.  The degree
$\deg \beta$, of a form is defined as a number of its arguments, by
definition $\deg c =0$, $c\in \mathbb R$. Interior multiplication of a
$(k+1)$-form $\theta$ and a vector $v\in V$,   is a $k$-form  $
\iota_v\theta$, defined by the equation
\begin{equation}
(\iota_v\theta)(u_1,\ldots,u_k) = (\deg \theta)\,\theta(v,u_1,\ldots,u_k)
\end{equation} where $u_1,\ldots,u_k \in V$.

For arbitrary forms $\iota_x(\mu\wedge\nu) = \iota_x\mu\wedge\nu
+(-1)^{deg \mu}\mu\wedge\iota_x\nu$.  For any vector $\iota_x^2 =0$, however
$\iota_x$ is exact in the sense that if $\iota_x\mu=0$, then there is a
form $\nu$, $\deg \nu = \deg \mu +1$, and $\iota_x\nu=\mu$. For vectors
$x_1,\ldots,x_k\in V$, we set $\iota_{[x_1x_2\ldots
x_k]}=\iota_{x_1}\iota_{x_2}\ldots\iota_{x_k}$, changing order results
$\iota_{[x_{i_1}\ldots x_{i_k}]} = \pm \iota_{[x_1\ldots x_k]}$, where
$\pm $ is sign of permutation $(1\mapsto i_1,\ldots ,k\mapsto
i_k)$. The operator $\iota_{[x_1\ldots x_j]}$, $j \geq 1$, we call
j'th-derivative and denote $\iota^{(j)}$.

With help of interior multiplication we can define pairing between
k-tuples $(x_1,\ldots,x_k)$ of vectors  and k-forms, by the formula 
\begin{equation}
<[x_1\ldots x_k],\theta> = 
\iota_{[x_1\ldots x_k]}\theta = \pm k!\,\theta(x_1,\ldots,x_k),
\end{equation}
on the right hand side $\pm$, is  sign of the reverse 
$(1\mapsto k, 2\mapsto (k-2),\ldots, k\mapsto 1)$. 
From this definition follows  that pairing is non-degenerate: if
\[
<[x_1\ldots x_k],\theta>=0,
\] for any k-tuple, then $\theta=0$.  For
$\theta=\eta_1\wedge\ldots\wedge\eta_k$  simple
\begin{equation}
<[x_1\ldots x_k],\theta> = \pm\det |\eta_i(x_j)|,\quad i,j=1,\ldots,k.
\end{equation}  If
$(e_1,\ldots,e_n)$, is an ordered base of $V$, and order is extended to dual forms
$(\alpha_1,\ldots,\alpha_n)$, $\alpha_i(e_j)=\delta_{ij}$, then 
\begin{equation*}
\iota_{[e_{i_1}\ldots e_{i_k}]}\theta, \quad i_1 < \ldots < i_k,
\end{equation*}
is  coefficient of $\beta$, at term $\alpha_{i_1}\wedge\ldots \wedge \alpha_{i_k}$.

For  vector subspace $C \subset V$, $0 \leq p=\dim C < n$, let  
\begin{equation}
C^0 = \left\{ \alpha \in V^*\; |\; \alpha(x) =0, \;x\in C\right\},
\end{equation} be a space of all 1-forms  vanishing
on $C$. For $\dim C =0$, $C^0=V^*$. We set $k=\dim C^0$,
 then $k=\codim C = n-p$. 

We are interested in studying properties of forms related to the pair
$(V,C)$. By definition $(V,\lbrace 0\rbrace) = V$. Isomorphism of
$(V,C)$ is  non-degenerate linear map of $V$, leaving $C$
invariant. The linear base $(e_1,\ldots,e_p, \ldots, e_n)$ of $(V,C)$
is a  base of $V$, where $(e_1,\ldots,e_p)$, span $C$. By  change of a
base, it is understood passing from base to base of $(V,C)$.  In
similar manner  we understand isomorphisms and linear bases of
$(V^*,C^0)$.  Usual duality of linear maps $f \leftrightarrow f^*$,
$f: V\rightarrow V$,  $f^*: V^*\rightarrow V^*$, $f^*\alpha = \beta$,
$\beta(x)=\alpha(fx)$, $x\in V$, establishes duality of pairs $(V,C)$
and $(V^*,C^0)$: $f$ is isomorphism  of $(V,C)$ if and only if $f^*$
is isomorphism of $(V^*,C^0)$. The same is true  for the operation of
base change: by duality changing bases of $(V,C)$ is equivalent  to
changing bases of $(V^*,C^0)$.  We will use these facts without
explicitely  referring to them.

The pair
$(V,C)$ gives rise to properly defined sub-algebra
$\Lambda(C^0)\subset \Lambda(V^*)$.  If $(\alpha_1,\ldots,\alpha_k)$
is a base  of $C^0$, then $\Lambda^1(C^0)=C^0$, and $\tau \in
\Lambda^l(C^0)$, $l\geq 2$, means, that $\tau$ is a sum of
$\wedge$-products of $\alpha_i$'s.  Any element
$\tau\in\Lambda^l(C^0)$, $l\geq 1$, nullifies $C$,  in the sense that
$\iota_{[v_1\ldots v_j]}\tau=0$, whenever at least one of
$v_1,\ldots, v_j$ is  in $C$.

Let the $(\beta_1,\dots,\beta_{n-k})$ be such that
$(\alpha_1\ldots,\alpha_k,\beta_1, \ldots, \beta_{n-k})$ is a base of
$V^*$.  A form $\omega$  can be written
as a sum 
\begin{equation}
\label{decomp}
\omega =\mu_{s_1}+\mu_{s_2}+\dots , \quad s_1 < s_2 < \ldots, 
\end{equation}
where
\begin{equation}
\mu_{s_l}=  \sum\limits_{\sigma=(i_1< \ldots <i_{s_l})} 
 \beta^\sigma_{(s_l)}\wedge\alpha_{i_1}\wedge\ldots\wedge
\alpha_{i_{s_l}},\quad l=1,2,\ldots,
\end{equation} 
\begin{equation}
\beta^\sigma_{(s_l)} = \sum\limits_{j_1 < \ldots < j_{m_l}}c_{\sigma,s_l}^{j_1\ldots j_{m_l}}
\beta_{j_1}\wedge\ldots\wedge\beta_{j_{m_l}}, \quad
c_{\sigma,s_l}^{j_1\ldots j_{m_l}}=const.,
\end{equation}
and 
\[
 s_l+m_l = \deg \omega ,\quad l=1,2,\ldots .
\]
 The form $\mu_{s_1}$ will
be called  main part of $\omega$ and we denote it as $\omega^*$, and
by $|\omega^*|$  the common number of $\alpha_i$'s in each summand 
of the main part, so 
\begin{equation}
|\omega^*| =\deg (\alpha_{i_1}\wedge\ldots\wedge\alpha_{i_{s_1}}) = s_1.
\end{equation}
The difference $\omega
-\omega^*$, is a {\em reminder}.  Such decomposition depends on the
choice of the  base of $(V^*,C^0)$, however  if
\[
\omega =\mu_{s'_1}+\mu_{s'_2}+\dots,
\] is the respective decomposition in some other base, then $s_1 = s'_1$.

\begin{proposition} 
\label{auxprop} For any $(j+s)$-form $\omega \neq 0$,  $s=|\omega^*|$,
$1 \leq j \leq  \dim C$, there is derivative $\iota^{(j)}$, such that
$\iota^{(j)}\omega \neq 0 \in \Lambda^s(C^0)$.  
\end{proposition}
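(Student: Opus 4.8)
The plan is to work in one fixed base and reduce everything to the combinatorial structure of the main part. First I would complete the given base $(\alpha_1,\dots,\alpha_k)$ of $C^0$ to a base $(\alpha_1,\dots,\alpha_k,\beta_1,\dots,\beta_p)$ of $V^*$, where $p=\dim C$, and pass to the dual base $(f_1,\dots,f_k,g_1,\dots,g_p)$ of $V$. Since each $g_b$ is annihilated by every $\alpha_i$ and there are exactly $p=\dim C$ of them, $(g_1,\dots,g_p)$ is a base of $C$, and $\iota_{g_b}\alpha_i=0$, $\iota_{g_b}\beta_a=\delta_{ab}$. The point of this choice is that $\iota_g$ kills every $\alpha_i$ for $g\in C$, so by the Leibniz rule any derivative $\iota_{[g_{b_1}\dots g_{b_l}]}$ built from the $g$'s carries $\Lambda(V^*)$ into $\Lambda(C^0)$: on each monomial every surviving contraction must land on a $\beta$-factor, leaving a scalar times a product of $\alpha_i$'s.

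Next I would isolate the main part. By construction every summand of $\omega^\ast$ carries exactly $s$ of the $\alpha_i$'s, hence, since $\deg\omega=j+s$, exactly $j$ of the $\beta_a$'s; so one may write $\omega^\ast=\sum_{A}\beta_A\wedge\gamma_A$, the sum over $j$-element subsets $A=\{a_1<\dots<a_j\}$ of $\{1,\dots,p\}$, with $\beta_A=\beta_{a_1}\wedge\dots\wedge\beta_{a_j}$ and $\gamma_A\in\Lambda^s(C^0)$. Every summand of the reminder $\omega-\omega^\ast$ carries strictly fewer than $j$ of the $\beta_a$'s (it has more than $s$ of the $\alpha_i$'s). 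Therefore any $j$-th derivative $\iota^{(j)}=\iota_{[g_{b_1}\dots g_{b_j}]}$ of the above type annihilates the reminder outright, because $j$ contractions cannot all land on fewer than $j$ available $\beta$-factors.

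It remains to choose $A$. By definition of the main part, $\omega\neq0$ forces $\omega^\ast\neq0$, so $\gamma_A\neq0$ for some $j$-subset $A=\{a_1<\dots<a_j\}$; such an $A$ exists precisely because $j\leq p=\dim C$. Take $\iota^{(j)}=\iota_{[g_{a_1}\dots g_{a_j}]}$. For a summand $\beta_{A'}\wedge\gamma_{A'}$ with $A'\neq A$, iterating Leibniz and using $\iota_g\gamma_{A'}=0$ for $g\in C$ reduces $\iota^{(j)}(\beta_{A'}\wedge\gamma_{A'})$ to $\pm(\iota^{(j)}\beta_{A'})\,\gamma_{A'}$; by the determinant formula of the Preliminaries $\iota^{(j)}\beta_{A'}=\pm\det\bigl(\beta_{a'_{l}}(g_{a_m})\bigr)$, a matrix of Kronecker symbols which has a zero row since $A'\not\subseteq A$, hence vanishes, whereas for $A'=A$ the same computation gives $\pm1$. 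Consequently $\iota^{(j)}\omega=\iota^{(j)}\omega^\ast=\pm\gamma_A\neq0$, and it has degree $(j+s)-j=s$ and lies in $\Lambda^s(C^0)$ by the first paragraph.

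The routine bookkeeping — the signs in the Leibniz expansion and the factorials hidden in the pairing — is not the difficulty. The one genuine point to get right is that, of the two ``degrees'' a monomial carries (its number of $\alpha$'s and its number of $\beta$'s), it is the $\beta$-count that controls how a $C$-valued contraction acts, so that the main part is exactly the piece that survives $j$ such contractions while the reminder is exactly the piece that dies; once this is recognised, the argument is the short computation above.
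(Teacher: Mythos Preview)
Your argument is correct and follows the same strategy as the paper: decompose $\omega$ into main part plus reminder, note that any $j$-fold interior product by vectors in $C$ annihilates the reminder (each summand there has fewer than $j$ $\beta$-factors), and then choose the derivative so that the main part survives. The only difference is in this last step: the paper argues abstractly via non-degeneracy of the pairing that some $v_1,\dots,v_j\in C$ make at least one coefficient $\langle[v_1\dots v_j],\beta^\sigma\rangle$ nonzero and then invokes linear independence of the $\alpha_\sigma$, whereas you pick the dual-basis vectors $g_{a_1},\dots,g_{a_j}$ to isolate a single $\gamma_A$ explicitly.

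One small correction that does not affect the argument: the general claim in your first paragraph that any $C$-valued derivative $\iota_{[g_{b_1}\dots g_{b_l}]}$ carries $\Lambda(V^*)$ into $\Lambda(C^0)$ is false as stated (for instance $\iota_{g_1}(\beta_1\wedge\beta_2\wedge\alpha_1)=\pm\,\beta_2\wedge\alpha_1\notin\Lambda(C^0)$); what is true, and what you actually use, is that such a derivative lowers the $\beta$-count by one on each nonzero monomial. Your conclusion that $\iota^{(j)}\omega=\pm\gamma_A\in\Lambda^s(C^0)$ is nonetheless justified, since $\gamma_A$ was defined in your second paragraph as an element of $\Lambda^s(C^0)$, so you need not appeal to the first paragraph for this.
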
   
\begin{proof}  We fix  base of $(V^*,C^0)$,  let  $1 \leq j \leq \dim
C$ be  the common degree $j=\deg \beta^{\sigma}$, of coefficients of
main part $\omega^*=\sum_\sigma \beta^\sigma\wedge\alpha_\sigma$.
For vectors $v_1,\ldots,v_j \in C$  
\begin{equation}
\omega'=\iota_{[v_1\ldots v_j]}\omega^*= 
\sum\limits_{\sigma}<[v_1\ldots v_j],\beta^\sigma>\alpha_\sigma,
\end{equation} let $ c^{\sigma}= <[v_1\ldots
v_j],\beta^{\sigma}>$, then for some $v_1,\ldots,v_j$, at least
one $c_\sigma$ is non-zero. As forms $\alpha_{\sigma}\in
\Lambda^s(C^0)$, in the decomposition of $\omega^*$, are linearly
independent, we have $\omega'\neq 0$, $\omega'\in \Lambda^s(C^0)$. Now
it is enough to notice, that all $j$-derivatives $\iota_{[v_1\ldots
v_j]}$, $v_1,\dots, v_j \in C$, of the reminder of $\omega$ are zero.
\end{proof}   
\begin{remark} In the case $|\omega^*|=0$ this result states that
$\iota^{(l)}\omega=const \neq 0$.
 \end{remark}


\section{\'Slebodzi\'nski Lemma} Let $\Omega \neq 0$ be a 2-form on
$V$, $ \dim V=n+2p \geq 3$, where $p\geq 1$, is a rank of $\Omega$, so $p$ is maximal
integer such that $\Omega^{\wedge p}\neq 0$. The space $C$ is now
the kernel of $\Omega$
\[
C = \left\{ x \in V\;| \; \iota_x\Omega =0 \right\}, 
\]
then $\dim C =n$, $\dim C^0=2p$.
 Clearly $\Omega \in \Lambda(C^0)$. For $C =\lbrace 0 \rbrace $, we have
$\Lambda(C^0)=\Lambda(V^*)$. 
We want to answer  the question, under what  conditions the
equation 
\begin{equation}
\label{omwbet}
\Omega\wedge\beta =0,
\end{equation}
 has non-trivial solution $\beta\neq 0$. 

\begin{lemma}
\label{alglem}
Let $1 \leq l \leq \dim V -2$ and $ 0 \leq s\leq \min (2p,l)$, define
\begin{equation}
\mathcal K_{l,s} =\left\{ \beta \in \Lambda^l(V^*)\setminus \lbrace 0 \rbrace \;|\; 
\Omega\wedge\beta = 0,\, s=|\beta^*| \right\}, 
\end{equation}
then
\begin{gather}
\mathcal K_{l,s} =\emptyset, \quad 0\leq s < \min (p,l),  \\
\mathcal  K_{l,s} \neq \emptyset, \quad s \geq p . 
\end{gather}
\end{lemma}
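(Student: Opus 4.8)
The plan is to localise the whole question in the subalgebra $\Lambda(C^0)$, where $(C^0,\Omega)$ is a symplectic vector space of dimension $2p$. Fix a Darboux base $(\alpha_1,\dots,\alpha_{2p},\beta_1,\dots,\beta_n)$ of $V^*$ adapted to $(V^*,C^0)$, with $\Omega=\alpha_1\wedge\alpha_2+\dots+\alpha_{2p-1}\wedge\alpha_{2p}$ and $W$ the linear span of $\beta_1,\dots,\beta_n$. Since $\Omega\in\Lambda^2(C^0)$, the map $L:=\Omega\wedge\,\cdot\,$ leaves invariant the grading of $\Lambda(V^*)=\Lambda(C^0)\otimes\Lambda(W)$ by the number of $\beta_j$'s and raises the number of $\alpha_i$'s by two. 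Hence, writing $\beta=\mu_{s_1}+\mu_{s_2}+\dots$ as in \eqref{decomp} with $s_1=|\beta^*|=s$, the terms $\Omega\wedge\mu_{s_j}$ lie in pairwise distinct bigraded summands, so $\Omega\wedge\beta=0$ forces $\Omega\wedge\beta^*=0$; expanding $\beta^*=\sum_\tau w_\tau\wedge\gamma_\tau$ with $\{w_\tau\}$ a base of $\Lambda^{\,l-s}(W)$ and $\gamma_\tau\in\Lambda^s(C^0)$ (not all $0$, because $\beta\ne0$) this is equivalent to $\Omega\wedge\gamma_\tau=0$ for every $\tau$. (Alternatively one may extract such a $\gamma$ through Proposition~\ref{auxprop}: for suitable $v_1,\dots,v_{l-s}\in C$ the form $\gamma:=\iota^{(l-s)}\beta\in\Lambda^s(C^0)$ is nonzero, and since $\iota_v\Omega=0$ for $v\in C$, the Leibniz rule gives $\iota^{(l-s)}(\Omega\wedge\beta)=\pm\,\Omega\wedge\gamma$, hence $\Omega\wedge\gamma=0$.) In both directions the problem has become: for which $s$ is $\ker\bigl(L\colon\Lambda^s(C^0)\to\Lambda^{s+2}(C^0)\bigr)$ nonzero?

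The emptiness statement thus amounts to the injectivity of $L$ on $\Lambda^s(C^0)$ for $0\le s\le p-1$, which is the hard Lefschetz theorem for the symplectic space $(C^0,\Omega)$ and is the real obstacle here. I would prove it by the Lefschetz $\mathrm{sl}_2$-triple: besides $L$ introduce the (suitably normalised) adjoint $\Lambda\colon\Lambda^k(C^0)\to\Lambda^{k-2}(C^0)$ built from interior multiplications by the base $(e_i)$ dual to $(\alpha_i)$, and the grading operator $H=(k-p)\,\mathrm{id}$ on $\Lambda^k(C^0)$. The relations $[H,L]=2L$, $[H,\Lambda]=-2\Lambda$ are immediate; the one genuine computation, conveniently done in the Darboux normal form, is $[L,\Lambda]=H$. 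Once $\Lambda(C^0)$ is recognised as a finite-dimensional $\mathrm{sl}_2$-module, a $\gamma\in\Lambda^s(C^0)$ with $L\gamma=0$ is a highest–weight vector of weight $s-p$, so $s-p\ge0$; therefore $\gamma\ne0$ is impossible when $s\le p-1$, and $\mathcal K_{l,s}=\emptyset$ for $s<p$, in particular for $0\le s<\min(p,l)$.

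For the non-emptiness when $s\ge p$ I would write down an explicit solution. Put $k:=s-p$, so $0\le k\le p$ because $p\le s\le2p$, and set
\[
\gamma:=\alpha_1\wedge\alpha_3\wedge\dots\wedge\alpha_{2p-1}\wedge\alpha_2\wedge\alpha_4\wedge\dots\wedge\alpha_{2k}\ \in\ \Lambda^s(C^0)\setminus\{0\}.
\]
Each summand $\alpha_{2i-1}\wedge\alpha_{2i}$ of $\Omega$ already contains the factor $\alpha_{2i-1}$ present in $\gamma$, so $\alpha_{2i-1}\wedge\alpha_{2i}\wedge\gamma=0$ and hence $\Omega\wedge\gamma=0$. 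Now take $\beta:=\gamma\wedge\beta_1\wedge\dots\wedge\beta_{l-s}$, which is legitimate since $l-s\le n=\dim C$ in the admissible range: then $\beta\ne0$, $\Omega\wedge\beta=\pm(\Omega\wedge\gamma)\wedge\beta_1\wedge\dots\wedge\beta_{l-s}=0$, and $|\beta^*|=s$ because $\beta$ is a single monomial with exactly $s$ factors $\alpha_i$. Thus $\beta\in\mathcal K_{l,s}$, proving $\mathcal K_{l,s}\ne\emptyset$ for $s\ge p$.

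Finally I would record for completeness that the same argument yields slightly more: $\Omega\wedge\beta=0$ with $\deg\beta<p$ forces $\beta=0$ (apply the bigraded reduction to every homogeneous component of $\beta$), so in fact $\mathcal K_{l,s}=\emptyset$ for every admissible $s$ once $l<p$ — which is why the empty range in the statement is phrased with $\min(p,l)$ — while $\ker(L|_{\Lambda^s(C^0)})\ne0$ exactly for $p\le s\le 2p$, so $s=p$ is the genuine threshold.
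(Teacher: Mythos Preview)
Your argument is correct, and the reduction step --- passing from $\beta$ to a nonzero $\gamma\in\Lambda^s(C^0)$ with $\Omega\wedge\gamma=0$ --- is exactly what the paper does (indeed you cite Proposition~\ref{auxprop} for it). The two proofs diverge after that.

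For the emptiness part the paper avoids Lefschetz theory entirely. Starting from $\Omega\wedge\beta'=0$ with $\beta'\in\Lambda^s(C^0)$, it shows by a short induction that $\Omega^{\,i+1}\wedge\iota_{[v_1\cdots v_i]}\beta'=0$ for all $v_1,\dots,v_i\in V$: apply $\iota_{v_{i+1}}$ to the inductive hypothesis, wedge the result with $\Omega$, and observe that one of the two terms is killed by the hypothesis itself. Taking $i=s$ turns $\iota_{[v_1\cdots v_s]}\beta'$ into the scalar $\langle[v_1\cdots v_s],\beta'\rangle$, and since $s<p$ gives $\Omega^{s+1}\neq0$, every such pairing vanishes, whence $\beta'=0$. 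This is completely elementary --- no $\mathfrak{sl}_2$-triple, no representation theory --- whereas your route, while perfectly valid and conceptually transparent, imports the commutator identity $[L,\Lambda]=H$ and the classification of finite-dimensional $\mathfrak{sl}_2$-modules as a black box.

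For the non-emptiness part the trade-off goes the other way: the paper simply notes that $\dim\Lambda^s(C^0)=\binom{2p}{s}>\binom{2p}{s+2}=\dim\Lambda^{s+2}(C^0)$ for $s\ge p$, so $L$ cannot be injective, and then wedges any nonzero kernel element with a suitable $(l-s)$-form. Your explicit monomial $\gamma=\alpha_1\wedge\alpha_3\wedge\cdots\wedge\alpha_{2p-1}\wedge\alpha_2\wedge\cdots\wedge\alpha_{2k}$ is more informative, exhibiting a concrete element of $\mathcal K_{l,s}$ rather than merely asserting its existence.

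One small caveat, which applies equally to the paper: the claim ``$l-s\le n$ in the admissible range'' is not a consequence of the stated hypotheses $1\le l\le\dim V-2$ and $s\le\min(2p,l)$ alone; rather, $l-s\le n$ is forced by the very requirement that $|\beta^*|=s$ be attainable (a term with $s$ factors from $C^0$ needs $l-s\le n$ factors from the complement). So this is a constraint implicit in the definition of $\mathcal K_{l,s}$, not something to be deduced.
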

\begin{proof}
Let  $\beta\neq 0$, be a $l$-form, $l\geq 1$, $s=|\beta^*|$, 
 and define  $\beta'$  as follows: 
\begin{equation}
\beta' = \begin{cases}
\beta,  &\textrm{if $\beta \in \Lambda^l(C^0)$,} \\
\iota^{(j)}\beta\in \Lambda^s(C^0),  &\textrm{ $\iota^{(j)}\beta$ is as in the Proposition {\bf \ref{auxprop}},}
\end{cases}
\end{equation}
here $\iota^{(j)}=\iota_{[v_1\ldots v_j]}$, for some vectors $v_1,\ldots, v_j\in C$.  For $\beta\neq \beta'$ 
\begin{equation}
\label{const}
0= \iota_{[v_1\ldots v_j]}(\Omega\wedge\beta) = 
\Omega\wedge (\iota_{[v_1\ldots v_j]}\beta) = \Omega\wedge \beta'.
\end{equation}  
Thus, in any case we have
\begin{equation}
\Omega\wedge\beta=0 \quad \Rightarrow \quad \Omega\wedge\beta'=0.
\end{equation} The right hand side of this implication follows, that
$\beta'$ can not be a constant, thus $s > 0$.  By induction 
\begin{equation}
\label{indform}
\Omega\wedge\beta'=0 \quad \Rightarrow \quad
 \Omega^{\wedge(i+1)}\wedge\iota_{[v_1\ldots v_i]}\beta'=0,  
\end{equation}
 for any vectors $v_1,\ldots, v_i \in V$. For $s < p$
\begin{equation}
\Omega\wedge\beta'=0 \quad \Rightarrow \quad 
<[v_1\ldots v_s],\beta'> \Omega^{s+1}=0, 
\end{equation} the last equation  follows $<[v_1\ldots
v_s],\beta'>=0$.  Non-degeneracy of $<\cdot,\cdot>$, implies
$\beta'=0$.  Hence $s \geq p$.

 Simple dimension considerations, $ \dim \Lambda^s(C^0) =
\binom{2p}{s} $, follow, that for $s \geq p$ there is $\beta' \neq
0\in \Lambda^s(C^0)$, such that $\Omega\wedge\beta' =0$. Now we may
take $\beta=\tau\wedge\beta'\neq 0 \in \mathcal K_{l,s}$, for some
$(l-k)$-form $\tau$, then $\Omega\wedge\beta =0$.

\end{proof}

\begin{corollary}
\label{degbound} The immediate consequence of the above result is that
degree of $ \beta\neq 0 $, in (\ref{omwbet}) is always bounded below
by the  rank of $\omega$, $\deg \beta \geq p$.   
\end{corollary}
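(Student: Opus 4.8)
The plan is to read off the bound directly from Lemma~\ref{alglem}, using only the elementary fact that the degree of the main part of a form never exceeds its total degree. Let $\beta \neq 0$ be any solution of (\ref{omwbet}), and put $l = \deg\beta$ and $s = |\beta^*|$, where $p$ is the rank of $\Omega$. By the definition of the decomposition (\ref{decomp}), every summand of the main part $\beta^*$ has the shape $\beta^\sigma\wedge\alpha_{i_1}\wedge\ldots\wedge\alpha_{i_s}$ with $\deg\beta^\sigma = l-s \geq 0$; hence $s \leq l$, that is $|\beta^*| \leq \deg\beta$. This is the only ingredient needed beyond the Lemma itself.

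First I would record that $\beta \in \mathcal K_{l,s}$, so this set is non-empty. The decisive input is the lower bound $s \geq p$ for the main-part degree of any non-trivial solution, which is precisely what the proof of Lemma~\ref{alglem} establishes: one passes (via Proposition~\ref{auxprop} when $\beta \notin \Lambda^l(C^0)$, and with $\beta'=\beta$ otherwise) to a non-zero $\beta' \in \Lambda^s(C^0)$ with $\Omega\wedge\beta' = 0$, and then the propagation identity (\ref{indform}) together with the non-degeneracy of the pairing forbids $s < p$. Indeed, if $s < p$ then $s+1 \leq p$, so $\Omega^{\wedge(s+1)} \neq 0$, and the relation $<[v_1\ldots v_s],\beta'>\,\Omega^{\wedge(s+1)} = 0$ would force $<[v_1\ldots v_s],\beta'> = 0$ for all $v_1,\ldots,v_s$, hence $\beta'=0$, a contradiction. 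Consequently $|\beta^*| = s \geq p$ for every $\beta \neq 0$ satisfying (\ref{omwbet}).

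Chaining the two inequalities yields $\deg\beta = l \geq s = |\beta^*| \geq p$, which is the claimed bound. I anticipate no real obstacle here: once the Lemma is granted, the corollary is the purely formal composition $\deg\beta \geq |\beta^*| \geq p$. The only point worth stating with care is that the consequence of the Lemma actually used is the unconditional inequality $|\beta^*| \geq p$ — which is visible in the Lemma's proof and is marginally stronger than the emptiness clause $\mathcal K_{l,s} = \emptyset$ for $s < \min(p,l)$ as literally displayed, since it also settles the borderline case $s = l < p$, where $\beta$ lies entirely in $\Lambda^l(C^0)$ and is killed by the same non-degeneracy argument.
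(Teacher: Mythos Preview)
Your argument is correct and is precisely the unpacking of the word ``immediate'' in the paper: the corollary is stated without separate proof, and the intended reasoning is exactly the chain $\deg\beta \geq |\beta^*| \geq p$, the second inequality being the content of the first half of the proof of Lemma~\ref{alglem}. Your remark that the needed bound $|\beta^*|\geq p$ is established in the proof of the Lemma (covering also the borderline case $s=l<p$) rather than read off from the displayed emptiness clause $s<\min(p,l)$ is a fair and useful sharpening of the exposition.
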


\begin{remark}
\label{rankbound} Let $\omega_1\wedge\omega_2=0$, for $2$-forms
$\omega_1$, $\omega_2$, on a vector space $V^n$. If $\omega_1 \neq 0$
and $\omega_2\neq 0$, then $\omega_1$, $\omega_2$ are  at most of rank
2; there are 1-forms
$(\eta_1,\eta_2,\beta_1,\beta_2)$, such that 
\begin{equation}
\omega_1=\eta_1\wedge\eta_2+c\,\beta_1\wedge\beta_2, \quad c\in \mathbb R,
\end{equation}
and we have the following five possibilities for $\omega_2$
\begin{equation}
\eta_1\wedge\eta_2-c\beta_1\wedge\beta_2,\quad
b\,\eta_i\wedge\beta_j,\quad b\neq 0 \in \mathbb R, \quad i,j=1,2,  
\end{equation} 
\end{remark}

The operation $\beta \mapsto \omega\wedge \beta$ defines family of maps
$\lambda^k :\Lambda^k(V)\rightarrow \Lambda^{k+2}(V)$,
\begin{equation}
\Lambda^k(V)\ni \beta \mapsto \omega\wedge\beta \in \Lambda^{k+2}(V), \quad
k =0,\ldots, n,
\end{equation} 
clearly $\lambda^{n-1}=\lambda^n=0$.
\begin{corollary} Let $\omega$ has trivial kernel, so $\dim V=2p$,
$\omega^{\wedge p}\neq 0$.  Then maps $\lambda^k$ are 1-1, for $k \leq
p-1$. In particular $\lambda^{p-1}$ is isomorphism of the spaces
$\Lambda^{p-1}(V)$ and $\Lambda^{p+1}(V)$.
\end{corollary}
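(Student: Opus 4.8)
The plan is to deduce the statement almost verbatim from Corollary~\ref{degbound}. Since $\omega$ has trivial kernel and $\dim V = 2p$ with $\omega^{\wedge p}\neq 0$, the rank of $\omega$ in the sense of the previous section is exactly $p$ (it cannot exceed $p$ because $\dim V = 2p$). Fix $k$ with $0\le k\le p-1$ and take $\beta\in\Lambda^k(V)$ with $\lambda^k\beta=\omega\wedge\beta=0$; the goal is to conclude $\beta=0$. For $k=0$ this is immediate, since then $\omega\wedge\beta=\beta\,\omega$ and $\omega\neq 0$. For $1\le k\le p-1$ I would first check that $k$ lies in the admissible range of Lemma~\ref{alglem}, namely $k\le p-1\le 2p-2=\dim V-2$; then Corollary~\ref{degbound} applies and forces $\deg\beta\ge p$, contradicting $\deg\beta=k\le p-1$. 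Hence $\beta=0$, and each $\lambda^k$ with $k\le p-1$ is injective.

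For the final claim I would invoke a dimension count: $\lambda^{p-1}\colon\Lambda^{p-1}(V)\to\Lambda^{p+1}(V)$ is injective by the above, and $\dim\Lambda^{p-1}(V)=\binom{2p}{p-1}=\binom{2p}{p+1}=\dim\Lambda^{p+1}(V)$, so an injective linear map between these two equidimensional spaces is automatically onto, i.e. an isomorphism.

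I do not expect any genuine obstacle here: the substantive work was already carried out in Lemma~\ref{alglem} and Corollary~\ref{degbound}, and what remains is purely bookkeeping. The only points deserving a line of care are the boundary case $k=0$, which must be settled by the trivial remark $\omega\neq 0$ rather than by the lemma (whose range of degrees begins at $l=1$), and the identification of the ``rank'' appearing in Corollary~\ref{degbound} with $p$ under the trivial-kernel hypothesis; the symmetry $\binom{2p}{p-1}=\binom{2p}{p+1}$ powering the last sentence is, of course, elementary.
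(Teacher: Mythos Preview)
Your argument is correct and matches the paper's intent: the paper states this result as an immediate corollary of Lemma~\ref{alglem} and Corollary~\ref{degbound} without giving a separate proof, and your write-up is precisely the natural unpacking of that implication, including the dimension count $\binom{2p}{p-1}=\binom{2p}{p+1}$ for the isomorphism claim.
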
 For $p=2$, this Corollary rediscovers the result of
Lee, \cite{Lee} (cf. \'Slebodzi\'nski, \cite{Sleb}, p. 314) 

\begin{remark}
It is unknown to the author, but it is expected that for $k\geq p$,  
$\lambda^k$ is epimorphism. Moreover, it is possible in combinatorial 
way, to describe forms spanning the kernel of $\lambda^k$.
\end{remark}

\section{Main result and its applications}
At the beginning let consider two examples.

\begin{example}  Let $d\beta=\eta\wedge\beta$, for non-zero 1-forms
$\beta$ and $\eta$. Notice, that $\eta$ in this equation is
non-unique: two  solutions differs by  $f\beta$,  for a function $f$.
However, $\beta\wedge d\beta=0$, wich means, that the kernel of
$\beta$ is involutive, and locally $\beta=f\beta_0$, for some
closed 1-form $\beta_0$. Then $d\beta= d\ln |f|\wedge\beta $, so we may
set $\eta=d\ln |f|$. In conclusion we obtain, that the system
\begin{equation}
d\beta=\eta\wedge\beta, \quad d\eta=0,
\end{equation}
 determines $\eta$ uniquely.
\end{example}

\begin{example} Let $\omega$ be a 2-form of maximal rank on a manifold
$\mathcal M$.  In other words the kernel of $\omega$ is trivial or
one-dimensional.  Let again, $d\omega = \eta\wedge\omega $.   Here
$\eta$ is unique if $\dim \mathcal M\geq 4$. No additional assumptions
are needed.
\end{example}

On a manifold the rank of a  $2$-form may vary from point to point.
For a $2$-form $\omega$ we denote by $r(\omega)$ function which
associates to each point  the rank of $\omega$ at this point.  
Let $r(\omega)=k$, at a point. Then $\omega^k\neq 0$ and $r(\omega)\geq k$ 
on some neighborhood $\mathcal U$ of this point. This argument 
proves, that $r(\omega)$ is lower semi-continuous on any manifold.

\begin{theorem} Let $\omega$ be  smooth $2$-form on
connected, smooth manifold $\mathcal M$, $\dim \mathcal M \geq
4$, such that
\begin{equation}
d\omega=\beta\wedge\omega,
\end{equation}
for some $1$-form $\beta$. 
It is assumed that the set of points, where $\omega =0$
 is nowhere dense in $\mathcal M$.  Let define
\begin{gather}
\mathcal A = \{  r(\omega) > 2 \},\quad
\mathcal B = \{  d\beta \neq 0,\, \omega \neq 0\}, \quad
\mathcal C =\{ r(\omega) \leq 1\},
\end{gather}
then   
\begin{itemize}
\item[a)] $\mathcal A$ is open;  if non-empty, then $d\beta = 0$ on
$\mathcal A$;
\item[b)] If $\mathcal B$ is non-empty, 
then $1 \leq r(d\beta),\, r(\omega) \leq 2$ on $\mathcal B$;
\item[c)] For $\mathcal C$ nowhere dense $\beta$ is unique.
\end{itemize} 
Particularly, it follows from {\rm a)} and {\rm b)}, 
that $\mathcal A \cap \mathcal B = \emptyset$.
\end{theorem}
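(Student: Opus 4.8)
The plan is to base everything on the single pointwise fact, already recorded in the introduction, that $d\omega=\beta\wedge\omega$ forces $d\beta\wedge\omega=0$: apply $d$ to both sides and use $\beta\wedge\beta=0$. Thus at every point of $\mathcal M$ we have an equation of the type $\Omega\wedge\gamma=0$ (with $\Omega$ a $2$-form and $\gamma$ either $\omega$, $d\beta$, or a $1$-form) to which Corollary \ref{degbound} applies: a nonzero solution $\gamma$ must have $\deg\gamma\geq r(\Omega)$. The three statements then follow by combining this with the semicontinuity and density observations preceding the theorem.

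For a): The set $\mathcal A=\{r(\omega)>2\}=\{\omega^{\wedge 3}\neq 0\}$ is open, since $\omega^{\wedge 3}$ is continuous (equivalently, by lower semicontinuity of $r(\omega)$ established just above). At a point of $\mathcal A$ the $2$-form $d\beta$ satisfies $\omega\wedge d\beta=0$ with $r(\omega)\geq 3$; were $d\beta$ nonzero there, Corollary \ref{degbound} (with $\Omega=\omega$) would give $2=\deg(d\beta)\geq r(\omega)\geq 3$, a contradiction. Hence $d\beta=0$ at every point of $\mathcal A$.

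For b): At a point of $\mathcal B$ both $\omega$ and $d\beta$ are nonzero $2$-forms with $\omega\wedge d\beta=0$. Applying Corollary \ref{degbound} with $\Omega=\omega$ gives $2=\deg(d\beta)\geq r(\omega)$, and with $\Omega=d\beta$ (legitimate, since the wedge of two $2$-forms is symmetric) gives $2=\deg(\omega)\geq r(d\beta)$; since $\omega\neq 0$ and $d\beta\neq 0$ both ranks are also $\geq 1$, so $1\leq r(\omega),\,r(d\beta)\leq 2$ on $\mathcal B$. (Remark \ref{rankbound} yields the finer decompositions but is not needed for the stated bound.) The disjointness $\mathcal A\cap\mathcal B=\emptyset$ is then immediate: by a) $d\beta=0$ on $\mathcal A$, which excludes $\mathcal A$ from $\{d\beta\neq 0\}\supseteq\mathcal B$; equivalently b) forces $r(\omega)\leq 2$ on $\mathcal B$, incompatible with the definition of $\mathcal A$.

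For c): If $\beta$ and $\beta'$ both satisfy $d\omega=\beta\wedge\omega=\beta'\wedge\omega$, set $\gamma=\beta-\beta'$, so $\omega\wedge\gamma=0$ everywhere. On the dense open set $\mathcal M\setminus\overline{\mathcal C}$ one has $r(\omega)\geq 2$, so Corollary \ref{degbound} applied to the $1$-form $\gamma$ forces $\gamma=0$ there (otherwise $1=\deg\gamma\geq r(\omega)\geq 2$). Since $\gamma$ is continuous and vanishes on a dense set, $\gamma\equiv 0$, i.e. $\beta=\beta'$. The only points requiring a little care are the bookkeeping of which factor plays the role of $\Omega$ when invoking Corollary \ref{degbound} in b) — both choices are valid by symmetry of $\wedge$ on $2$-forms — and, in c), the observation that nowhere-density of $\mathcal C$ is exactly what makes $\mathcal M\setminus\overline{\mathcal C}$ dense, so that continuity of $\gamma$ closes the argument.
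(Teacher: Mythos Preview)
Your proof is correct and follows essentially the same route as the paper: derive $d\beta\wedge\omega=0$, then apply the pointwise degree bound (Corollary~\ref{degbound}) together with lower semicontinuity of $r(\omega)$ and density arguments. The only cosmetic difference is in b), where the paper cites Remark~\ref{rankbound} directly, while you instead apply Corollary~\ref{degbound} twice by symmetry of the wedge; your version is slightly more self-contained for the stated inequalities, though it forgoes the explicit normal forms that Remark~\ref{rankbound} provides. (Incidentally, since $\{r(\omega)\geq 2\}$ is open by semicontinuity, $\mathcal C$ is already closed, so your $\overline{\mathcal C}$ in part c) equals $\mathcal C$.)
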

\begin{proof} At first we notice that $d\omega=\beta\wedge\omega$
follows $d\beta\wedge\omega =0$, at any point of $\mathcal M$.

\noindent $\Rightarrow$ a) $\mathcal A$ is open comes from
the fact, that  $r(\omega)$ is lower semi-continuous. Let $\mathcal
A\neq \emptyset$. Take $p\in \mathcal A$. Then from $(d\beta)_p\wedge\omega_p
=0$ and Lemma $\ref{alglem}$, cf. also Corollary $\ref{degbound}$,
follow $(d\beta)_p=0$. 

\noindent $\Rightarrow$ b) In the case $\mathcal B\neq \emptyset$, let
$p\in \mathcal B$, for $(d\beta)_p\neq 0$ and
$(d\beta)_p\wedge\omega_p=0$,  by the Remark
$\ref{rankbound}$.1, the ranks satisfy $1 \leq r(d\beta),\,
r(\omega)\leq 2$ at $p$. 

\noindent $\Rightarrow$ c) By assumption set of points where  $r(\omega) \geq 2$,
is open an dense; at each point of this set  $\beta$ is  unique, hence is unique 
everywhere.    
\end{proof}

For $\omega$ of maximal rank  and $\dim \mathcal M \geq 6$,
we have $\mathcal A = \mathcal M$.

\begin{corollary} 
\label{corgen}
Let $\dim \mathcal M \geq 6$, and $\omega$ be
a  $2$-form of maximal rank, such that $d\omega=\beta\wedge\omega$. Then $\beta$ is closed,
$d\beta=0$.
\end{corollary}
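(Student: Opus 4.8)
The plan is to read the conclusion off directly from the Theorem, after checking that its hypotheses apply. First I would note that if $\omega$ has maximal rank on $\mathcal M$ with $\dim \mathcal M = n \geq 6$, then at \emph{every} point the rank of $\omega$ equals $\lfloor n/2\rfloor \geq 3$. In particular $\omega$ is nowhere zero, so the exceptional set $\{\omega = 0\}$ is empty and hence nowhere dense, and all the standing assumptions of the Theorem ($\mathcal M$ connected, $\dim \mathcal M \geq 4$, $d\omega = \beta\wedge\omega$, vanishing set nowhere dense) are satisfied. Moreover, since $r(\omega) \geq 3 > 2$ identically, the set $\mathcal A = \{\, r(\omega) > 2 \,\}$ is all of $\mathcal M$, so in particular $\mathcal A$ is non-empty.

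Next I would simply invoke part a) of the Theorem: $\mathcal A$ being non-empty, $d\beta = 0$ on $\mathcal A = \mathcal M$, which is exactly the assertion $d\beta = 0$. So the corollary follows with essentially no further work.

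If one prefers a self-contained pointwise argument that bypasses the global bookkeeping of the Theorem, I would proceed as follows. Applying $d$ to $d\omega = \beta\wedge\omega$ gives $d\beta\wedge\omega = 0$ at each point $q \in \mathcal M$. Now $d\beta$ is a $2$-form, while the rank of $\omega$ at $q$ is $p = \lfloor n/2\rfloor \geq 3$. By Corollary~\ref{degbound} (equivalently, Lemma~\ref{alglem}), any nonzero form $\theta$ with $\theta\wedge\omega = 0$ must satisfy $\deg\theta \geq p$; a nonzero $2$-form would force $2 = \deg(d\beta) \geq p \geq 3$, a contradiction. Hence $(d\beta)_q = 0$ for every $q$, i.e.\ $d\beta = 0$.

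There is no genuinely hard step here; the one elementary point to verify carefully is that ``maximal rank'' together with $\dim \mathcal M \geq 6$ forces the pointwise rank to be at least $3$, so that $\omega$ never vanishes and $r(\omega) > 2$ everywhere. All the substantive content is already carried by Lemma~\ref{alglem} and by the Theorem; the corollary is just the specialization to the case where the open set $\mathcal A$ exhausts $\mathcal M$.
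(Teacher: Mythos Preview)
Your proposal is correct and matches the paper's approach exactly: the paper simply observes that for $\omega$ of maximal rank with $\dim\mathcal M\geq 6$ one has $\mathcal A=\mathcal M$, and then reads off $d\beta=0$ from part a) of the Theorem. Your additional self-contained pointwise argument via Corollary~\ref{degbound} is precisely the content of the proof of part a), so nothing is genuinely different.
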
  

In particular case of even-dimensional manifolds we
can restate the following result
\begin{corollary}[Lee-Libermann-\'Slebodzi\'nski, \cite{Lee,Lib1,Sleb}]
Let $\omega$ be a non-degenerate $2$-form on even-dimensional manifold
$\mathcal M$. Assume that $d\omega=\beta\wedge\omega$. If $\dim
\mathcal M \geq 6$, then $\beta$ is closed, $d\beta=0$. 
\end{corollary}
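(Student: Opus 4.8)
The plan is to read this statement as the even-dimensional case of Corollary~\ref{corgen}, so that the whole argument reduces to matching up hypotheses and invoking what has already been proved.

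First I would set $\dim\mathcal M=2m$ with $m\geq 3$. Non-degeneracy of $\omega$ means that at every point the only vector $x$ with $\iota_x\omega=0$ is $x=0$; equivalently $\omega^{\wedge m}\neq 0$ at every point, i.e. $\omega$ has maximal (constant) rank $r(\omega)\equiv m$. In particular $\omega$ vanishes nowhere, so the exceptional set $\{\omega=0\}$ is empty, hence trivially nowhere dense, and the Theorem is applicable.

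Next, since $m\geq 3>2$, the set $\mathcal A=\{r(\omega)>2\}$ of the Theorem is all of $\mathcal M$, and it is non-empty. Part~a) of the Theorem then gives $d\beta=0$ on $\mathcal A=\mathcal M$, which is the assertion. One may equally well just quote Corollary~\ref{corgen}, whose hypotheses --- $\dim\mathcal M\geq 6$ and $\omega$ of maximal rank --- are exactly what the present hypotheses supply in the even-dimensional case.

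Finally, I would indicate where the actual content lives, so the reader can unwind the reference: the identity $d\omega=\beta\wedge\omega$ forces $d\beta\wedge\omega=0$ at each point $q$, and by Lemma~\ref{alglem} together with Corollary~\ref{degbound} a nonzero form annihilated by the $2$-form $\omega_q$ must have degree at least the rank of $\omega_q$, here $m\geq 3$; since $d\beta$ has degree $2<3$ it vanishes at $q$, and $q$ being arbitrary $d\beta\equiv 0$. Thus there is no genuine obstacle at this stage --- the only ``work'' is the bookkeeping that ``non-degenerate'' on an even-dimensional manifold is precisely ``maximal rank with empty zero set'', and that $\dim\mathcal M\geq 6$ makes the relevant rank exceed $2$, which, as the dimension-four situation shows, is exactly where the hypothesis is needed.
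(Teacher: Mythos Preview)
Your proposal is correct and matches the paper's approach exactly: the paper presents this corollary as the even-dimensional special case of Corollary~\ref{corgen}, without a separate proof, noting only that the latter is an enhancement valid also in odd dimensions. Your additional unwinding through the Theorem and Lemma~\ref{alglem}/Corollary~\ref{degbound} is faithful to how the paper itself derives Corollary~\ref{corgen}.
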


We emphasize that the Corollary {\bf \ref{corgen}}, is an enhancement
of the above mentioned result, for it holds also for odd-dimensional
manifolds.  
 
\begin{remark} 
 In general, let $(\mathcal M, J,g)$, be an almost Hermitian
manifold, where $\omega$ is now the fundamental form of $\mathcal M$,
$\omega(X,Y)=g(JX,Y)$, and
\begin{equation}
\label{lcK}
d\omega=\beta\wedge\omega, \quad d\beta=0,
\end{equation} so, $\beta$ is closed. If we focus  only on a
sufficiently small open disk $\mathcal D \subset\mathcal M$, then
$\beta_{\mathcal D}=\beta|_{\mathcal D}$ is exact on $\mathcal D$,
$f:\mathcal D \rightarrow \mathbb R$, $df = \beta_{\mathcal D}$.  Now,
the structure $(J|_{\mathcal D}, e^{-f}g|_{\mathcal D})$ is an almost
K\"ahler structure on $\mathcal D$.  According to Vaisman \cite{Vais1},  such
manifolds are called locally conformal (almost)  K\"ahler,
(l.c.a.K. manifolds). The form $\beta$ is called Lee
form.   

In modern literature, many authors, when referring the notion of
l.c.a.K manifolds, are using (\ref{lcK}). Of course such definition is
correct but redundant and, worse, can be confusing, suggesting that 
$d\beta=0$ is general requirement. If $\dim \mathcal M \geq 6$,  once the
fundamental form  satisfies $d\omega=\beta\wedge\omega$, the Lee form
is automatically closed, $d\beta=0$.
\end{remark}

\begin{remark} An almost contact metric manifold $(\mathcal
M,\phi,\xi,\eta,g)$ is called almost $\alpha$-cosymplectic \cite{HAKAN}, if 
\begin{equation} 
\label{alalcos}
d\eta=0,\quad d\Phi = 2\alpha\eta\wedge\Phi,
\end{equation}  where $\Phi$ is fundamental form of $\mathcal M$,
$\Phi(X,Y)=g(\phi X,Y)$ and  $\alpha$ is a function on $\mathcal
M$. Form $\Phi$ is non-degenerate and the Reeb vector field $\xi$,
spans kernel of $\Phi$, $\iota_\xi\Phi=0$.
 Particularly, for $\alpha =1 $,
(\ref{alalcos}) defines class of almost Kenmotsu
manifolds, \cite{DilPast}. 
 
If $\dim \mathcal
M > 5$, then the condition $d\Phi=2\alpha\eta\wedge\Phi$, yields, that
 $\alpha\eta$, is closed
\begin{equation} 
d(\alpha\eta)= d\alpha\wedge\eta =0.
\end{equation} Hence $d\alpha = f\eta$, $f = \xi\alpha$.  There are two simple 
remarks:
\begin{itemize}
\item[a)] there is no need to require $d\eta=0$, in the definition of 
almost Kenmotsu manifolds, for dimensions $> 5$,
\item[b)] 
let drop the assumption $d\eta=0$, in
(\ref{alalcos}), nevertheless, near points where $d\alpha\neq 0$, we have
$d\eta\wedge\eta=0$, so the kernel distribution $\eta = 0$, on such
domains is completely integrable.
\end{itemize}   
\end{remark}

\begin{example} Let 
\begin{equation}
\omega_f=e^fdx^1\wedge dx^2 + dy^1\wedge dy^2,
\end{equation}
be defined on $\mathbb R^4$, $v=(x^1,x^2,y^1,y^2)\in \mathbb R^4$.  
Then
$d\omega_f=\beta_f\wedge\omega_f$, and for particular choices of the
function $f$, the form $\beta$ satisfies a priori imposed conditions.  
Set  $f=f_0=x^1y^1+x^2y^2$, $\omega_0=\omega_{f_0}$, then  
\begin{equation}
\beta_0 = x^1dy^1+x^2dy^2,\quad d\beta_0=dx^1\wedge dy^1+dx^2\wedge dy^2,
\end{equation}
clearly $d\beta_0\wedge \omega_0 =0$.  
\end{example}

\begin{example} On the basis of the previous example, let construct on
a manifold  $\mathcal M = \mathbb R_+\times \mathbb R^4$, $p=(t,v)\in
\mathcal M$, $t >0$, $v\in \mathbb R^4$,  a structure consisting of
a pair $(\eta,\Phi)$, where $\eta$ is a $1$-form, $d\eta=0$, $\Phi$ is
a $2$-form, and $\eta\wedge\Phi^{\wedge 2}$ is a volume (oriented) on
$\mathcal M$. Moreover
\begin{equation}
d\Phi = \gamma\wedge\Phi, \quad 
\textrm{and $\gamma$ is a contact form on $\mathcal M$}.
\end{equation} Directly, we verify that forms $\eta=dt$,
$\Phi=t\,\omega_o$, satisfy the required conditions,  
\begin{equation}
d\Phi= \gamma\wedge \Phi,  \quad 
\gamma = d\ln t+\beta_0= d\ln t +x^1dy^1+x^2dy^2, 
\end{equation}
now it is clear, that $\gamma$ is contact form. 
\end{example}

In \cite{Ol}, there is given example of 
four-dimensional Lie group with $\omega$ left-invariant, non-degenerate, 
and $d\omega = \beta\wedge\omega$, $d\beta\neq 0$.

\end{document}